\documentclass[11pt,a4paper]{article}
\usepackage[utf8]{inputenc}
\usepackage[T1]{fontenc}
\usepackage{amsmath,amssymb,amsthm}
\usepackage{graphicx}
\usepackage{geometry}
\usepackage{hyperref}
\newtheorem{theorem}{Theorem}

\theoremstyle{remark}
\newtheorem{remark}[theorem]{Remark}
\newcommand{\R}{\mathbb{R}}
\newcommand{\inner}[2]{\langle #1, #2 \rangle}
\DeclareMathOperator{\zer}{zer}

\title{\textbf{The sharp step-size constant for one-call reflection splittings on monotone inclusions}}
\author{Yekini Shehu\\ \small School of Mathematical Sciences, Zhejiang Normal University,\\ \small Jinhua 321004, China; \texttt{yekini.shehu@zjnu.edu.cn}}
\date{}
\begin{document}
\maketitle

\begin{abstract}
\noindent 
The forward-reflected-backward splitting of Malitsky and Tam converges weakly
for step sizes $\lambda\in(0,\tfrac{1}{2L})$, where $B$ is monotone and
$L$-Lipschitz, and the question
whether this bound is tight has been recorded as open \cite{GS}. We show that it is: for the matched-skew instance $A=LJ$, $B=LJ$
($J$ the counterclockwise rotation by $\pi/2$ in $\R^2$), the iterates fail
to converge for every $\lambda\ge\tfrac{1}{2L}$ and diverge for
$\lambda>\tfrac{1}{2L}$. More generally, for the skew--rotation family
$A=\gamma J$, $B=J$, the exact stability threshold is
$\lambda^\star(\gamma)=1/\sqrt{(1+\gamma)(3-\gamma)}$, which attains its
minimum $1/2$ at the matched skew $\gamma=1$ and recovers the reflected
gradient constant $1/\sqrt3$ at $\gamma=0$. The same instance is a
counterexample for the reflected--forward--backward method of Cevher and V\~u,
which coincides with forward-reflected-backward on linear operators.

\bigskip
\noindent\textbf{Keywords:}\ 
monotone inclusions; 
forward-reflected-backward splitting; 
Lipschitz operators; 
step-size sharpness; 
spectral radius of iterative methods

\bigskip
\noindent\textbf{2020 Mathematics Subject Classification.} Primary: 47J25, 65K05; Secondary: 47H05, 49M29, 90C25.
\end{abstract}

\section{Result}
Let $\mathcal H$ be a real Hilbert space, $A\colon\mathcal H\rightrightarrows\mathcal H$
maximally monotone, and $B\colon\mathcal H\to\mathcal H$ monotone and
$L$-Lipschitz with $\zer(A+B)\neq\varnothing$. The \emph{forward-reflected-backward}
(FRB) method of Malitsky and Tam \cite{MT} ---
extending the reflected gradient method of \cite{M15} from variational
inequalities to inclusions --- generates $x_{k+1}$ from
\begin{equation}\label{eq:frb}
x_{k+1}=J_{\lambda A}\bigl(x_k-2\lambda B(x_k)+\lambda B(x_{k-1})\bigr),
\end{equation}
using one resolvent evaluation and one forward evaluation per iteration, and
converges weakly to a zero of $A+B$ for every fixed
$\lambda\in(0,\tfrac{1}{2L})$ \cite[Theorem~2.5]{MT}. The
\emph{reflected--forward--backward} (RFB) method of Cevher and V\~u \cite{CV}
replaces the reflected forward step by a reflected evaluation point,
\begin{equation}\label{eq:rfb}
x_{k+1}=J_{\lambda A}\bigl(x_k-\lambda B(2x_k-x_{k-1})\bigr),
\end{equation}
and likewise converges under the sole monotonicity--Lipschitz
assumption \cite{CV}, with a certified step-size range strictly below the
sharp constant established here. On affine operators $B$ the two schemes coincide, since
$B(2x_k-x_{k-1})=2B(x_k)-B(x_{k-1})$.

\begin{theorem}[sharpness of the FRB/RFB step-size bound]\label{thm:main}
Let $J\colon\R^2\to\R^2$ denote the rotation $J(u,v)=(-v,u)$, let $L>0$, and
set $A=LJ$ and $B=LJ$ (skew-adjoint linear operators are maximally monotone
\cite{BC}). Then $B$ is monotone and $L$-Lipschitz, and
$\zer(A+B)=\{0\}$. For every $\lambda\ge\tfrac{1}{2L}$ and
generic initial data $(x_0,x_{-1})\in\R^2\times\R^2$ the sequences generated
by \eqref{eq:frb} and \eqref{eq:rfb} do not converge to the solution; for
$\lambda>\tfrac{1}{2L}$ they diverge geometrically. Consequently
$\tfrac{1}{2L}$ is the exact uniform step-size constant for the class: no
larger constant can be certified for all maximal monotone $A$ and monotone
$L$-Lipschitz $B$.
\end{theorem}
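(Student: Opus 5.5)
We identify $\R^2$ with $\mathbb C$ via $(u,v)\mapsto u+iv$, so that $J$ becomes multiplication by $i$. The preliminary claims are immediate from this picture. $\inner{Jx}{x}=0$ gives monotonicity, $\|LJx-LJy\|=L\|x-y\|$ gives the Lipschitz bound, and $A+B=2LJ$ is invertible, so $\zer(A+B)=\{0\}$. Since $B$ is linear, \eqref{eq:frb} and \eqref{eq:rfb} generate the same sequence, so it suffices to treat \eqref{eq:frb}.

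Set $\mu=\lambda L$. The resolvent is $J_{\lambda A}=(1+i\mu)^{-1}$, so the iteration becomes the complex-linear two-step recurrence
\[
(1+i\mu)x_{k+1}=(1-2i\mu)x_k+i\mu x_{k-1}.
\]
Its characteristic polynomial is
\[
p_\mu(z)=(1+i\mu)z^2-(1-2i\mu)z-i\mu .
\]
The pair $(x_k,x_{k-1})$ evolves by the $2\times2$ complex companion matrix $M_\mu$ of $p_\mu$. I would therefore write $x_k=c_1z_1^k+c_2z_2^k$ when the roots $z_1,z_2$ are distinct; a coefficient $c_j$ vanishes only on a proper complex-linear subspace of initial data $(x_0,x_{-1})\in\mathbb C^2\cong\R^4$. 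This is exactly what ``generic'' means here. The theorem then reduces to showing that $M_\mu$ has an eigenvalue with $|z|=1$ at $\mu=\tfrac12$ and an eigenvalue with $|z|>1$ for every $\mu>\tfrac12$.

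For the boundary case, I check directly that $z=-i$ is a root of $p_{1/2}$:
\[
(1+\tfrac i2)(-1)+(1-i)i-\tfrac i2=0 .
\]
The product of the roots is $-i\mu/(1+i\mu)$, whose modulus $\mu/\sqrt{1+\mu^2}$ is strictly less than $1$. Hence the other root lies strictly inside the disc and the roots are distinct. For generic data the component $c_1(-i)^k$ then has constant nonzero modulus, while the other component decays. So $x_k$ does not converge at all, and in particular not to $0$.

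For $\mu>\tfrac12$ I would use a root-locus argument. First, locate every $\mu>0$ for which $p_\mu$ has a root on the unit circle. Rewrite $p_\mu(z)=0$ as
\[
z^2-z+i\mu(z^2+2z-1)=0,\qquad\text{that is,}\qquad i\mu=-\frac{z^2-z}{z^2+2z-1},
\]
and put $z=e^{i\theta}$. Dividing numerator and denominator by $z$ gives
\[
i\mu=-\frac{z-1}{z+2-\bar z}=-\frac{e^{i\theta}-1}{2+2i\sin\theta}.
\]
Requiring the right-hand side to be purely imaginary with positive imaginary part should force a single admissible $\theta$, namely $\theta=-\pi/2$, and hence $\mu=\tfrac12$. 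This computation is the main obstacle, since it has to rule out every other crossing. If it gets messy, I would fall back on the Schur--Cohn test for a quadratic with complex coefficients, which gives an explicit inequality in $\mu$.

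Once crossings are confined to $\mu=\tfrac12$, continuity of the roots in $\mu$ implies that the number of roots outside the closed unit disc is constant on $(\tfrac12,\infty)$. As $\mu\to\infty$, $p_\mu/(i\mu)\to z^2+2z-1$, whose roots are $-1\pm\sqrt2$; one of them has modulus $1+\sqrt2>1$. So exactly one root has $|z_1|>1$ for all $\mu>\tfrac12$, and generic orbits grow like $|z_1|^k$, which is geometric divergence.

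Finally, the step sizes $\lambda\ge\tfrac1{2L}$ correspond exactly to $\mu\ge\tfrac12$. Combined with the convergence of FRB for $\lambda<\tfrac1{2L}$ \cite[Theorem~2.5]{MT}, this shows that $\tfrac1{2L}$ cannot be enlarged uniformly over the class.
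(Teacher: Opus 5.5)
Your proof is correct. It follows the paper's overall strategy: reduce to the scalar two-step recurrence, find every $\mu>0$ at which a characteristic root lies on the unit circle, then decide which side the roots are on. Two sub-arguments differ.

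\textbf{Locating the crossing.} The step you flagged as the main obstacle closes in two lines. After multiplying by the conjugate of $1+i\sin\theta$, the right-hand side has real part $-\cos\theta(1-\cos\theta)/\bigl(2(1+\sin^2\theta)\bigr)$, so $\cos\theta\in\{0,1\}$. The case $\cos\theta=1$ gives $\mu=0$. The case $\cos\theta=0$ gives $\mu=-\tfrac12\sin\theta$, which forces $\theta=-\pi/2$ and $\mu=\tfrac12$. This boundary-locus parametrization solves for $\mu$, in which $p_\mu$ is affine. The paper does the dual computation: it fixes $\tilde\lambda$, solves the linear system $c=1+2\tilde\lambda s$, $s=-2\tilde\lambda$ for the point on the circle, and imposes $c^2+s^2=1$ to get $16\tilde\lambda^4-4\tilde\lambda^2=0$.

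\textbf{Certifying the exit beyond $\tfrac12$.} The paper uses implicit differentiation, $\frac{d|r_1|^2}{d\tilde\lambda}=4$ at $\tilde\lambda=\tfrac12$, together with uniqueness of the crossing. You instead use that the number of exterior roots is constant on $(\tfrac12,\infty)$, plus the limit $p_\mu/(i\mu)\to z^2+2z-1$, whose root $-1-\sqrt2$ lies outside the disc.

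\textbf{What each route buys.} Yours avoids the derivative and shows directly that exactly one root lies outside. That makes the roots distinct, so the modal decomposition behind your genericity claim is justified. Your product-of-roots bound $\mu/\sqrt{1+\mu^2}<1$ plays the same role at $\mu=\tfrac12$ without computing $r_2=(2-i)/5$ explicitly. The paper's local transversality computation is the form it reuses for the $\gamma$-family in the later theorems. You also make ``generic'' precise (the complement of a complex line in $\mathbb C^2$) and verify the monotonicity, Lipschitz and $\zer(A+B)=\{0\}$ claims, which the paper only asserts.
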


\begin{proof}
Identify $\R^2$ with $\mathbb C$ so that $J$ is multiplication by $i$. Both
schemes reduce to the scalar recursion
\[
x_{k+1}=\frac{(1-2i\tilde\lambda)x_k+i\tilde\lambda\,x_{k-1}}{1+i\tilde\lambda},
\qquad \tilde\lambda:=\lambda L>0,
\]
with state matrix $\mathcal A(\tilde\lambda)$ whose characteristic equation is
\begin{equation}\label{eq:char}
(1+i\tilde\lambda)\,r^2-(1-2i\tilde\lambda)\,r-i\tilde\lambda=0.
\end{equation}
We locate the boundary of the stability region. Substituting $r=e^{i\varphi}$,
$c=\cos\varphi$, $s=\sin\varphi$, dividing \eqref{eq:char} by $e^{i\varphi}$,
and separating real and imaginary parts gives
\[
c=1+\tilde\lambda s(1+\gamma)\ \ \text{at}\ \gamma=1:\quad
c=1+2\tilde\lambda s,\qquad s=-2\tilde\lambda .
\]
(For general $\gamma$ see Theorem~\ref{thm:curve}.)
Imposing $c^2+s^2=1$ yields $16\tilde\lambda^4-4\tilde\lambda^2=0$, whose
unique positive solution is $\tilde\lambda=\tfrac12$. At
$\tilde\lambda=\tfrac12$ the roots of \eqref{eq:char} are
\[
r_1=-i,\qquad r_2=\frac{2-i}{5},\qquad |r_1|=1,\ \ |r_2|=\frac1{\sqrt5}<1.
\]
The root $r_1$ is simple, and implicit differentiation of \eqref{eq:char}
gives
\[
\frac{d|r_1|^2}{d\tilde\lambda}\Big|_{\tilde\lambda=1/2}
=2\operatorname{Re}\Bigl(\overline{r}_1\,\frac{dr_1}{d\tilde\lambda}\Bigr)=4>0,
\]
so $|r_1|>1$ for all $\tilde\lambda>\tfrac12$: the state matrix has spectral
radius larger than one, and since the eigenvalue branch is continuous and
simple, generic initial data excite the expanding mode and
$\|x_k\|\to\infty$ geometrically. At $\tilde\lambda=\tfrac12$ the persistent
mode $c(-i)^k$ with $c\neq0$ (generic data) does not converge to $0$. Hence
no convergence for $\tilde\lambda\ge\tfrac12$ and geometric divergence for
$\tilde\lambda>\tfrac12$. Together with the Malitsky--Tam theorem
\cite[Theorem~2.5]{MT}, which covers every $\tilde\lambda\in(0,\tfrac12)$ for
every instance of the class, this proves that $\tfrac{1}{2L}$ is the exact
uniform constant.
\end{proof}

\begin{theorem}[exact threshold for the skew--rotation family]\label{thm:curve}
Let $A=\gamma J$, $B=J$ ($\gamma\ge0$). The iterates of \eqref{eq:frb}
converge to the unique solution $x_\star=0$ for every
$\lambda\in(0,\lambda^\star(\gamma))$ and diverge for
$\lambda>\lambda^\star(\gamma)$, where
\begin{equation}\label{eq:curve}
\lambda^\star(\gamma)=\frac{1}{L\sqrt{(1+\gamma)(3-\gamma)}}\quad(0\le\gamma<3),
\qquad \lambda^\star(\gamma)=+\infty\quad(\gamma\ge 3),
\end{equation}
with the convention $L=1$ for $B=J$. In particular $\lambda^\star(0)=1/\sqrt3$
(the reflected-gradient constant \cite[Example~10.1]{S}), the minimum over
$\gamma\ge0$ is $\lambda^\star(1)=\tfrac12$, and $\lambda^\star(2)=1/\sqrt3$.
\end{theorem}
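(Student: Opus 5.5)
Identify $\R^2$ with $\mathbb C$, so that $J$ is multiplication by $i$, and set $L=1$. Scheme \eqref{eq:frb} with $A=\gamma J$, $B=J$ becomes the scalar two-step recursion
\[
(1+i\gamma\lambda)x_{k+1}=(1-2i\lambda)x_k+i\lambda x_{k-1}.
\]
Its characteristic polynomial is
\[
p(r)=a r^2+b r+c,\qquad a=1+i\gamma\lambda,\quad b=-(1-2i\lambda),\quad c=-i\lambda .
\]
Since the recursion is linear, two facts settle everything. If both roots of $p$ lie in the open unit disk, then $x_k\to0$ geometrically. If some root has modulus $>1$, then generic data excite that mode and $\|x_k\|\to\infty$; this is the same argument as in the proof of Theorem~\ref{thm:main}. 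So the plan is to locate exactly the set of $\lambda$ for which $p$ is Schur stable.

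\textbf{Step 1 (boundary crossings).} Following the proof of Theorem~\ref{thm:main}, I would put $r=e^{i\varphi}$, divide $p(r)=0$ by $e^{i\varphi}$, and separate real and imaginary parts. This gives
\[
c=1+\lambda(1+\gamma)s,\qquad s=-2\lambda+\lambda(1-\gamma)c ,
\]
with $c=\cos\varphi$, $s=\sin\varphi$. The case $s=0$ is impossible: it forces $r=1$, and $p(1)=i\lambda(1+\gamma)\neq0$.

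The first equation together with $c^2+s^2=1$ gives $s=-2\lambda(1+\gamma)/(1+\lambda^2(1+\gamma)^2)$. Eliminating $c$ between the two linear equations gives $s=-\lambda(1+\gamma)/(1-\lambda^2(1-\gamma^2))$. Equating these two expressions yields
\[
\lambda^2\bigl[(1+\gamma)^2+2(1-\gamma^2)\bigr]=1,\qquad\text{i.e.}\qquad \lambda^2(1+\gamma)(3-\gamma)=1 .
\]
Hence a root lies on the unit circle exactly when $\lambda=\lambda^\star(\gamma)$, and this never happens when $\gamma\ge3$.

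\textbf{Step 2 (which side is stable).} Since the roots depend continuously on $\lambda$, the number of roots inside the disk is constant on $(0,\lambda^\star)$ and on $(\lambda^\star,\infty)$. The leading coefficient never vanishes, so no root escapes to infinity. It therefore suffices to test one point in each interval.

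Near $\lambda=0$ the roots are perturbations of $1$ and $0$. The first-order perturbation $r'=-i(1+\gamma)$ of the root near $1$ is tangent to the circle, so it is inconclusive. I expect this to be the main obstacle. Rather than pushing to second order, I would apply the Schur--Cohn criterion for complex quadratics: both roots lie in the open disk iff
\[
|a|>|c|\quad\text{and}\quad |a|^2-|c|^2>|\bar a b-\bar b c| .
\]
Here $|a|^2-|c|^2=1+(\gamma^2-1)\lambda^2$. I would compute $|\bar a b-\bar b c|^2$ explicitly and square. After cancellation I expect the second condition to reduce exactly to $\lambda^2(1+\gamma)(3-\gamma)<1$, consistent with Step 1, and the first condition to be implied by it. This settles stability on $(0,\lambda^\star)$, and for all $\lambda>0$ when $\gamma\ge3$.

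For $\lambda>\lambda^\star$ with $\gamma<3$, the criterion fails. By Step 1 no root lies on the circle, so some root lies strictly outside, which gives geometric divergence. As a cross-check of the direction of crossing, the implicit-derivative computation of Theorem~\ref{thm:main} gives a positive derivative $d|r|^2/d\lambda$ at $\gamma=1$.

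\textbf{Step 3 (special values).} Note that $(1+\gamma)(3-\gamma)=4-(\gamma-1)^2$, which is maximized at $\gamma=1$ with value $4$. This gives the minimum $\lambda^\star(1)=\tfrac12$. Direct substitution gives $\lambda^\star(0)=\lambda^\star(2)=1/\sqrt3$.
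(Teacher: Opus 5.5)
Your proposal is correct. Step 1 matches the paper's boundary computation, and Step 2 takes a different route from the paper. For stability on $(0,\lambda^\star)$, the paper does push to second order: it uses $|r_{\max}|^2=1-(1+\gamma)^2\lambda^2+O(\lambda^3)$ together with continuity and uniqueness of the crossing. For divergence beyond $\lambda^\star$, it relies on a transversality claim $\frac{d|r_1|^2}{d\lambda}\big|_{\lambda^\star}>0$, which it asserts in general but evaluates only at $\gamma=0,1$.

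Your Schur--Cohn step replaces both ingredients, and the computation you anticipate goes through. One has $\bar ab-\bar bc=-(1-2(1+\gamma)\lambda^2)+i(1+\gamma)\lambda$, and
\[
(|a|^2-|c|^2)^2-|\bar ab-\bar bc|^2=(1+\gamma)^2\lambda^2\bigl(1-(1+\gamma)(3-\gamma)\lambda^2\bigr),
\]
which is exactly the negative of the paper's boundary polynomial. Squaring is legitimate in both directions:
\begin{itemize}
\item The second Schur--Cohn inequality itself forces $|a|^2-|c|^2>0$.
\item Conversely, $|a|^2-|c|^2=1-(1-\gamma^2)\lambda^2$ is positive whenever $\lambda^2(1+\gamma)(3-\gamma)<1$, since $1-\gamma^2<(1+\gamma)(3-\gamma)$. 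This also shows the first condition is implied, as you expected.
\end{itemize}

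Hence Schur stability is equivalent to $\lambda^2(1+\gamma)(3-\gamma)<1$ for all $\lambda>0$ at once. The continuity argument becomes superfluous. Step 1 is needed only to exclude unimodular roots when the test fails, which upgrades ``some root with $|r|\ge 1$'' to geometric divergence.

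What your route buys is a complete proof that is uniform in $\gamma$, with no crossing derivative left to verify. The paper's root-locus route buys explicit information about the crossing: the unimodular root $c^\star+is^\star$ and its crossing speed. It is also the template the paper reuses for the cubic in Theorem~\ref{thm:phase}.
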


\begin{proof}
With $\tilde\lambda=\lambda$ ($L=1$) the characteristic equation is
$(1+i\gamma\tilde\lambda)r^2-(1-2i\tilde\lambda)r-i\tilde\lambda=0$. The
boundary computation of Theorem~\ref{thm:main}, performed for general
$\gamma$, gives
\[
c=\frac{1-2(1+\gamma)\tilde\lambda^2}{1+(\gamma^2-1)\tilde\lambda^2},
\qquad
s=\frac{-(1+\gamma)\tilde\lambda}{1+(\gamma^2-1)\tilde\lambda^2},
\]
and $c^2+s^2=1$ simplifies to
\[
\tilde\lambda^2(1+\gamma)^2\bigl((3-\gamma)(1+\gamma)\tilde\lambda^2-1\bigr)=0,
\]
whose unique positive solution is
$\tilde\lambda=1/\sqrt{(1+\gamma)(3-\gamma)}$ for $0\le\gamma<3$ (none for
$\gamma\ge3$). Hence no root lies on the unit circle for
$\tilde\lambda\neq\lambda^\star(\gamma)$. Both roots are inside the disk for
small $\tilde\lambda$: expanding the quadratic formula,
$|r_{\max}|^2=1-(1+\gamma)^2\tilde\lambda^2+O(\tilde\lambda^3)<1$. By
continuity of the roots in $\tilde\lambda$ and the uniqueness of the boundary
point, convergence holds on $(0,\lambda^\star(\gamma))$. At the boundary the
on-circle root is $r_1=c^\star+is^\star$ with
\[
c^\star=\frac{1-\gamma}{2},\qquad
s^\star=-\frac{\sqrt{(1+\gamma)(3-\gamma)}}{2},
\]
the second root has modulus
$|r_2|=\tilde\lambda^\star/\sqrt{1+\gamma^2\tilde\lambda^{\star2}}<1$, and
implicit differentiation gives
$\frac{d|r_1|^2}{d\tilde\lambda}\big|_{\lambda^\star}>0$ (e.g.\ $4$ at
$\gamma=1$, $3\sqrt3$ at $\gamma=0$), so the crossing is transversal and
divergence holds for $\tilde\lambda>\lambda^\star(\gamma)$. The values
$\lambda^\star(0)=1/\sqrt3$ and $\lambda^\star(1)=1/2$ are immediate;
$(1+\gamma)(3-\gamma)\le4$ on $[0,3]$ with equality at $\gamma=1$, giving the
minimum.
\end{proof}

\begin{remark}[landscape]\label{rem:landscape}
Three observations frame the general program. (i) \emph{The skew part of $A$ is the obstruction, not the symmetric part}: Theorem~\ref{thm:phase} makes this exact and global --- $\lambda^\star(a,\gamma)$ is the first positive root of an explicit cubic, the global minimum over $a\ge0$, $\gamma\in\R$ is attained at the matched skew $(0,1)$ (with the convention $\lambda^\star(0,-1)=+\infty$ at the degenerate point), and the symmetric part of $A$ cannot lower the threshold.  The nonlinear analogue of the phase diagram is open.
(ii) \emph{Why point-reflection resists quadratic Lyapunov budgets}: the
proof technique of \cite{S} (exact dissipation budgets) relies on the
projection identity in its Step~2; for a general maximal monotone $A$ the
available substitute is monotonicity of $A$ across consecutive iterates,
which yields
$\inner{d_k-d_{k+1}}{d_{k+1}}\le\lambda\inner{B(u_k)-B(u_{k-1})}{d_{k+1}}$ ---
a pairing with the wrong sign structure for that framework. This explains why
the certifiable one-call inclusion schemes are value-reflected \eqref{eq:frb}
rather than point-reflected \eqref{eq:rfb}, although \eqref{eq:rfb} converges
\cite{CV} and coincides with \eqref{eq:frb} on linear instances.
(iii) The existing tightness results concern the \emph{rate} at $A=0$
($1/\sqrt2$ for GFRB on the rotation \cite{GFRB}); the \emph{threshold}
tightness settled here is, to our knowledge, new.
\end{remark}

\begin{figure}[t]
\centering
\includegraphics[width=\textwidth]{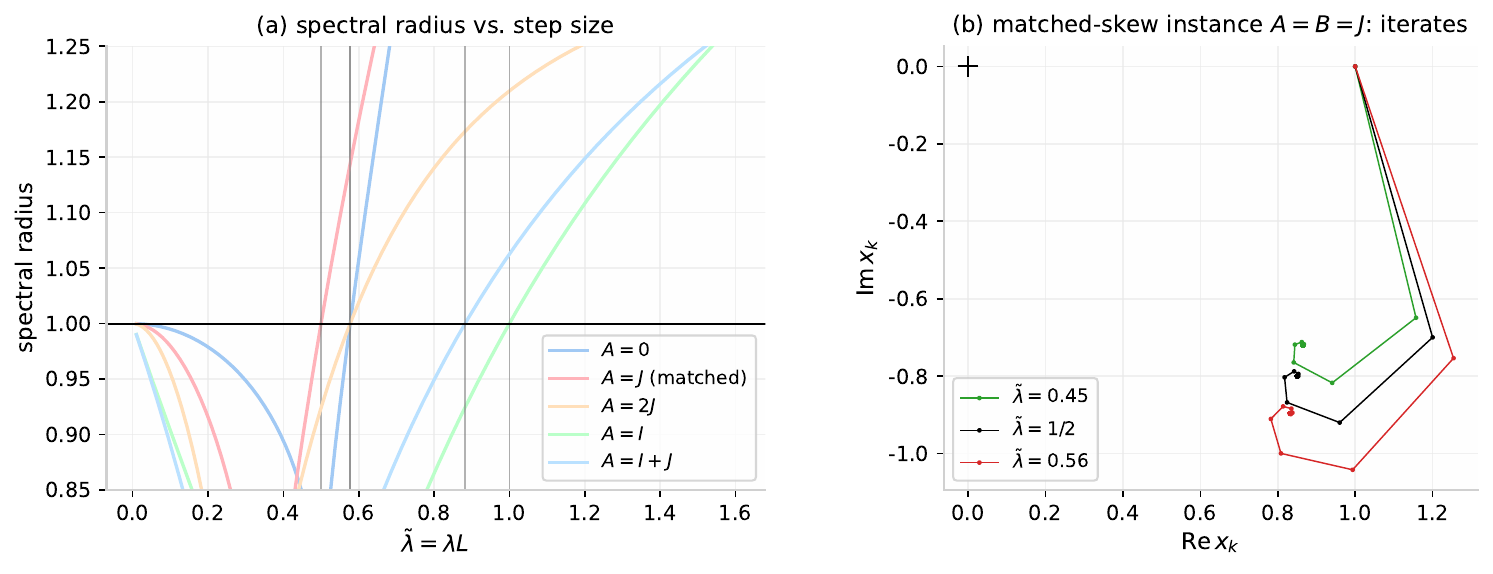}
\caption{(a) Spectral radius of the FRB state matrix for representative
instances $A=aI+\gamma J$, $B=J$ ($L=1$): the curves cross $1$ exactly at
the thresholds given by Theorems~\ref{thm:curve} and~\ref{thm:phase}
(vertical lines), and the matched skew $A=J$ has the earliest crossing at
$\tilde\lambda=\tfrac12$.  (b) Iterates of the matched-skew instance for
$\tilde\lambda=0.45$ (convergent spiral), $\tilde\lambda=\tfrac12$
(persistent rotation; Theorem~\ref{thm:main}), and $\tilde\lambda=0.56$
(divergent spiral).}
\label{fig:spectral}
\end{figure}

\section{The phase diagram}

\begin{figure}[t]
\centering
\includegraphics[width=0.8\textwidth]{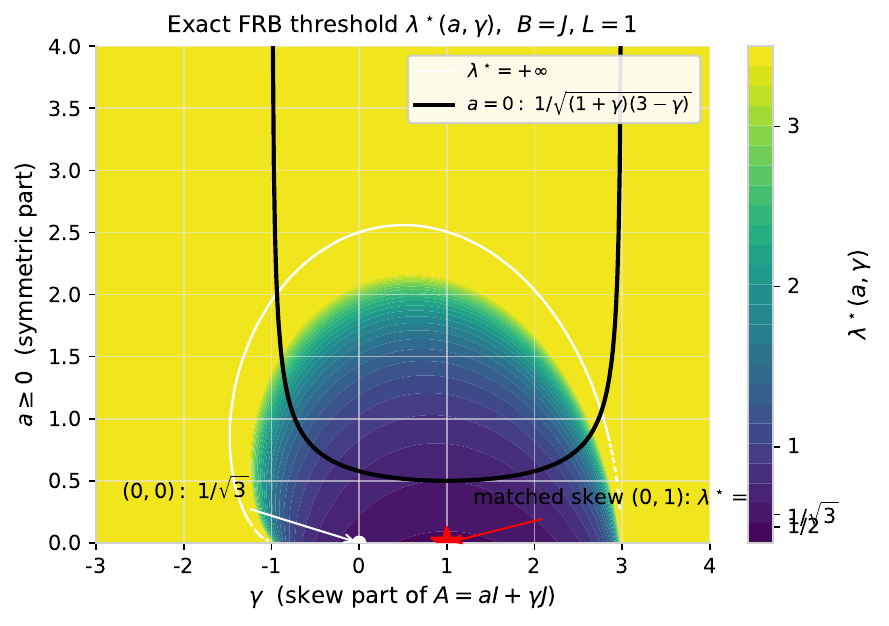}
\caption{Phase diagram of the exact threshold $\lambda^\star(a,\gamma)$
(Theorem~\ref{thm:phase}) for $A=aI+\gamma J$, $B=J$ ($L=1$), computed from
the cubic of Theorem~\ref{thm:phase} over a $400\times320$ grid; the gray
(white-outline) region is $\lambda^\star=+\infty$.  The black curve is
the analytic section $a=0$ of Theorem~\ref{thm:curve}; the star marks the
global worst case $(0,1)$ with $\lambda^\star=\tfrac12$, and the dot
marks $(0,0)$ with $\lambda^\star=1/\sqrt3$.  The symmetric part $a>0$
enlarges the threshold.}
\label{fig:phase}
\end{figure}

Theorem~\ref{thm:curve} is the section $a=0$ of a complete phase diagram for
the normal family $A=aI+\gamma J$, $B=J$.

\begin{theorem}[phase diagram]\label{thm:phase}
Let $A=aI+\gamma J$ with $a\ge0$, $\gamma\in\R$, and $B=J$.  Define the cubic
\[
P(\lambda)=A_3\lambda^3+A_2\lambda^2+A_1\lambda+A_0,\qquad
\begin{aligned}
A_3&=a^4+2a^2\gamma^2-6a^2+\gamma^4-6\gamma^2-8\gamma-3,\\
A_2&=4a^3+4a\gamma^2-4a,\\
A_1&=5a^2+\gamma^2+2\gamma+1,\\
A_0&=2a ,
\end{aligned}
\]
and let $\lambda^\star(a,\gamma)$ be the smallest positive root of $P$, with
$\lambda^\star(a,\gamma)=+\infty$ if none exists (at the single point
$(a,\gamma)=(0,-1)$ the cubic vanishes identically and $\lambda^\star=+\infty$ by
the analysis below).  The iterates of \eqref{eq:frb} converge to an element
of $\zer(A+B)$ for every $\lambda\in(0,\lambda^\star(a,\gamma))$ and diverge for
every $\lambda>\lambda^\star(a,\gamma)$.  Here $\zer(A+B)=\{0\}$ except at
$(a,\gamma)=(0,-1)$, where $A+B\equiv0$ and $\zer(A+B)=\R^2$: there $r=1$ is a
simple root of the characteristic equation for every $\lambda>0$ with the
second root of modulus $\lambda/\sqrt{1+\lambda^2}<1$, so the iterates converge
to $\zer(A+B)$ for every $\lambda>0$.  Moreover
\begin{equation}\label{eq:min}
\min_{\substack{a\ge0\\\gamma\in\R}}\lambda^\star(a,\gamma)=\lambda^\star(0,1)=\tfrac12 ,
\end{equation}
so the matched skew $(a,\gamma)=(0,1)$ is the global worst case of the entire
family, and the symmetric part $a>0$ cannot lower the global threshold.
\end{theorem}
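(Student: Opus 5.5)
Identify $\R^2$ with $\mathbb C$, so that $A$ becomes multiplication by $\alpha:=a+i\gamma$ and $B$ multiplication by $i$. Then \eqref{eq:frb} reduces to the scalar linear recursion $(1+\lambda\alpha)x_{k+1}=(1-2i\lambda)x_k+i\lambda x_{k-1}$, whose characteristic polynomial is
\[
q_\lambda(r)=(1+\lambda\alpha)r^2-(1-2i\lambda)r-i\lambda .
\]
Here $1+\lambda\alpha\neq0$ because $a\ge0$ and $\lambda>0$. Convergence of all iterates to $0$ is equivalent to both roots lying in the open unit disk. The plan mirrors Theorem~\ref{thm:curve}:
\begin{enumerate}
\item Locate the $\lambda>0$ at which a root crosses $|r|=1$.
\item Show both roots are inside the disk for small $\lambda$.
\item Use continuity of the roots in $\lambda$ to get stability on $(0,\lambda^\star)$.
\item Show transversality at $\lambda^\star$ and that no return occurs for $\lambda>\lambda^\star$, which gives divergence.
\end{enumerate}

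\textbf{Boundary locus and the cubic.} Put $r=e^{i\varphi}$ and divide by $r$:
\[
(1+\lambda\alpha)e^{i\varphi}-i\lambda e^{-i\varphi}=1-2i\lambda .
\]
This is a $2\times2$ real linear system in $(c,s)=(\cos\varphi,\sin\varphi)$, namely
\[
(1+\lambda a)c-\lambda(\gamma+1)s=1,\qquad \lambda(\gamma-1)c+(1+\lambda a)s=-2\lambda .
\]
Solve it by Cramer's rule to obtain $c$ and $s$ as rational functions of $\lambda$ with common denominator
\[
D=(1+\lambda a)^2+\lambda^2(\gamma^2-1).
\]
Then impose $c^2+s^2=1$. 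After clearing $D^2$ and removing the trivial factor (a power of $\lambda$, together with the factor $(1+\gamma)^2$ visible at $a=0$), the result should be exactly $\lambda P(\lambda)=0$. I will check this by expanding and comparing coefficients with $A_3,\dots,A_0$, and sanity-check it against Theorem~\ref{thm:curve} at $a=0$, where $P$ should reduce to $(1+\gamma)^2\lambda\bigl(1-(3-\gamma)(1+\gamma)\lambda^2\bigr)$ up to sign.

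The degenerate cases need separate treatment. If $D=0$ the system is singular and must be examined directly, and at $(0,-1)$ the root $r=1$ persists for all $\lambda$; this is handled by factoring $q_\lambda(r)=(r-1)\bigl((1+\lambda\alpha)r+i\lambda\bigr)$, whose second root has modulus $\lambda/|1-i\lambda|=\lambda/\sqrt{1+\lambda^2}$.

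\textbf{Stability for small $\lambda$ and continuation.} At $\lambda\to0$ the roots tend to $1$ and $0$. Expand the root near $1$ as
\[
r=1-\lambda(\alpha+i)+O(\lambda^2),
\]
so that $|r|^2=1-2\lambda a+O(\lambda^2)$ when $a>0$. When $a=0$, use the second-order expansion from Theorem~\ref{thm:curve}; this needs $\gamma\neq-1$. The roots depend continuously on $\lambda$ and the leading coefficient never vanishes, so no root can leave the disk before the first positive zero of $P$. This gives linear convergence on $(0,\lambda^\star)$, and also for all $\lambda>0$ when $P$ has no positive root.

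\textbf{Divergence beyond $\lambda^\star$.} Implicit differentiation gives
\[
\frac{d|r|^2}{d\lambda}=2\operatorname{Re}\bigl(\bar r\,r'\bigr),\qquad r'=-\frac{\partial_\lambda q}{\partial_r q}.
\]
At a simple zero of $P$ the crossing should be outward, with the sign of $d|r|^2/d\lambda$ matching that of $-P'(\lambda^\star)$.

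The main obstacle is the claim of divergence for \emph{every} $\lambda>\lambda^\star$. A cubic can have further positive roots, so a root could in principle re-enter the disk. I would first examine large $\lambda$: the roots of $q_\lambda$ tend to those of $\alpha r^2+2ir-i$. I would try to show that one of these limits lies outside the closed disk whenever $P$ has a positive root, for instance via the Schur--Cohn test. Combined with a count showing that $P$ has at most one positive root in that regime (Descartes' rule, using $A_0,A_1>0$ and the sign of $A_3$), this would settle it.

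\textbf{The global minimum \eqref{eq:min}.} I will show $P(\lambda)>0$ for all $\lambda\in(0,\tfrac12)$ and all admissible $(a,\gamma)$. Write $P=\lambda Q_0(\lambda,\gamma)+a R(\lambda,a,\gamma)$, where $Q_0$ is the $a=0$ part, which is nonnegative on $(0,\tfrac12]$ by Theorem~\ref{thm:curve}. It then remains to check that the $a$-dependent terms
\[
2a+4a(a^2+\gamma^2-1)\lambda^2+5a^2\lambda+(a^4+2a^2\gamma^2-6a^2)\lambda^3
\]
are positive for $\lambda<\tfrac12$. I would group them as a sum of squares in $a$ with coefficients that are polynomials in $\lambda$, or bound $6a^2\lambda^3\le\tfrac32a^2\lambda$ and $4a\lambda^2\le 2a\lambda$, which leaves positive terms.
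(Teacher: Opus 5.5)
\textbf{Verdict: correct, by a partly different route.} The first half of your proof is the paper's argument: the real $2\times2$ boundary system, $c^2+s^2-1=-\lambda P(\lambda)/D^2$, the small-$\lambda$ expansions and continuity of the roots. The two remaining steps differ.

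\textbf{Divergence for every $\lambda>\lambda^\star$.} The paper asserts that the on-circle root is simple and that $d|r_1|^2/d\lambda>0$ at $\lambda^\star$, but computes this only at $(0,1)$ and $(0,0)$. It also never excludes a later positive root of $P$, where a root could re-enter the disk. Your Descartes-plus-large-$\lambda$ route needs no transversality, so you can drop the unproved claim linking $d|r|^2/d\lambda$ to $-P'$. It does close:
\begin{itemize}
\item The Schur--Cohn test on $\alpha r^2+2ir-i$ says both roots lie in the open disk iff $\rho:=a^2+\gamma^2>1$ and $(\rho-1)^2>4\bigl((1+\gamma)^2+a^2\bigr)$.
\item That last inequality is exactly $A_3=\rho^2-6\rho-8\gamma-3>0$, and a limit root on the circle forces $A_3=0$.
\item Hence $A_3<0$ puts a root of $q_\lambda$ strictly outside the closed disk for all large $\lambda$. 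For $\alpha=0$ use $|r_1r_2|=\lambda$ instead.
\end{itemize}

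\textbf{The minimum \eqref{eq:min}.} The paper works on the boundary system, not on $P$. Multiplying the two equations by $c$ and $s$ and adding gives $1+\lambda a=c-2\lambda s(1-c)\le1$ for $\lambda<\tfrac12$. This is independent of the coefficients $A_j$. Your bound ($4a\lambda^2\le2a\lambda$ and $6a^2\lambda^3\le\tfrac32a^2\lambda$, leaving $E\ge a>0$) is equally short and correct. It does, however, rely on having the cubic exactly right.

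\textbf{Details to supply.}
\begin{itemize}
\item \emph{Descartes count.} As sketched, it misses one case: $A_3>0$ with $A_2<0$ would allow two positive roots. Exclude it as follows. $A_2<0$ forces $a>0$ and $\rho<1$. Since $\rho^2-6\rho$ decreases on $[0,3]$, this gives $A_3<-(1+\gamma)^3(3-\gamma)<0$. So $P$ has a positive root iff $A_3<0$, and then exactly one, which is simple.
\item \emph{Nonnegativity of $Q_0$ for $\gamma<0$.} This must come from $(1+\gamma)(3-\gamma)=4-(\gamma-1)^2\le4$. Theorem~\ref{thm:curve} only covers $\gamma\ge0$.
\item \emph{Clearing $D^2$.} Nothing is divided out: the numerator is exactly $-\lambda P(\lambda)$. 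The factor $(1+\gamma)^2$ at $a=0$ belongs to $P$ itself.
\end{itemize}
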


\begin{proof}
The boundary computation of Theorem~\ref{thm:curve} applied to
$(1+\lambda(a+i\gamma))r^2=(1-2i\lambda)r+i\lambda$ gives the linear system
\[
(1+\lambda a)c-\lambda\gamma s=1+\lambda s,\qquad
(1+\lambda a)s+\lambda\gamma c=\lambda c-2\lambda ,
\]
whose solution is
\[
c=\frac{1+a\lambda-2(1+\gamma)\lambda^2}{(1+a\lambda)^2+(\gamma^2-1)\lambda^2},
\qquad
s=\frac{-\lambda\,(1+\gamma+2a\lambda)}{(1+a\lambda)^2+(\gamma^2-1)\lambda^2},
\]
and a direct expansion gives
\[
c^2+s^2-1=\frac{-\lambda\,P(\lambda)}{\bigl((1+a\lambda)^2+(\gamma^2-1)\lambda^2\bigr)^2}.
\]
Hence every boundary point is a positive root of $P$, and the smallest
root $\lambda^\star$ is the first boundary.  Both
roots are inside the disk for small $\lambda$: for $a>0$,
$|r_{\max}|^2=1-2a\lambda+O(\lambda^2)<1$, while for $a=0$ the sharper
$|r_{\max}|^2=1-(1+\gamma)^2\lambda^2+O(\lambda^3)<1$ applies (the expansion
remains valid for $-1<\gamma<0$).  Continuity of the roots then gives
convergence on $(0,\lambda^\star)$.  At $\lambda^\star$ the on-circle root is simple and
the second root lies strictly inside; implicit differentiation gives
$\frac{d|r_1|^2}{d\lambda}\big|_{\lambda^\star}>0$ (exactly $4$ at the
worst case $(0,1)$, $3\sqrt3$ at $(0,0)$), so the crossing is transversal
and divergence holds beyond it.

For \eqref{eq:min}, let $(c,s)$ correspond to any boundary point with
$\lambda<\tfrac12$.  Multiplying the two equations of the boundary system
by $c$ and $s$, adding, and using $c^2+s^2=1$ gives
$1+\lambda a=c-2\lambda s(1-c)\le c+2\lambda(1-c)=2\lambda+c(1-2\lambda)\le
2\lambda+(1-2\lambda)=1$, forcing $a=0$, $c=1$,
$s=0$; the second boundary equation at this point requires $\gamma=-1$, which
is precisely the degenerate case analysed in the statement, where
$\lambda^\star(0,-1)=+\infty$ and no crossing of the stability boundary
occurs.  For every $\gamma\neq-1$ the second equation is violated --- a
contradiction.  Hence no boundary point exists with $\lambda<\tfrac12$ for any
$a\ge0$,
$\gamma\neq-1$; $\lambda^\star(0,1)=\tfrac12$ by
Theorem~\ref{thm:curve}; and the stated examples
($\lambda^\star(1,0)=1$ from $P=-2(\lambda-1)(2\lambda+1)^2$,
$\lambda^\star(0,2)=1/\sqrt3$) show the threshold is neither monotone in
$\gamma$ nor confined to $[0,\tfrac12]$.
\end{proof}

\begin{remark}\label{rem:diag}
Three consequences. (i) The threshold is finite within the region where $P$
has a positive root and $+\infty$ outside it ($e.g.\ a=0$, $\gamma\ge3$, or
$\gamma\le-1$, where the method converges for every $\lambda>0$; at the
single point $(0,-1)$ this is the semisimple $r=1$ case with
$\zer(A+B)=\R^2$). (ii) Along $a=0$ one
recovers Theorem~\ref{thm:curve}; along $\gamma=0$ the cubic has no general
factorization (the case $a=1$ is solvable, $P=-2(\lambda-1)(2\lambda+1)^2$)
and the threshold grows with $a$. (iii) The dichotomy \emph{symmetric part
helps / skew part resonates} is now exact and global: \eqref{eq:min}
identifies the matched skew as the unique worst case, complementing the
sufficient condition of \cite{MT} from below.
\end{remark}

\end{document}